\DeclareMathOperator{\corank}{corank}
\newcommand{\JW}[1]{f_{1}}
\newcommand{\coeff}[2]{\coefficient_{\in \JW{1}^{2}}}
\DeclareMathOperator{\coefficient}{coeff}
\newcommand{\db}[1]{\left(\left(1\right)\right)}
\newtheorem{theorem}{Theorem}[section]
\newtheorem{lemma}[theorem]{Lemma}
\newtheorem{definition}[theorem]{Definition}
\newtheorem{claim}[theorem]{Claim}
\newtheorem{conjecture}[theorem]{Conjecture}
\begin{document}

\title
	{Proof of a conjecture of Fomichev and Karev}
	
	\author{Qi Yan\\
		{\small School of Mathematics and Statistics, Lanzhou University, P. R. China}\\
        Qingying Deng\footnote{Corresponding author.}\\
        {\small School of Mathematics and Computational Science, Xiangtan University, P. R. China}\\
		Xian'an Jin\\
		{\small School of Mathematical Sciences, Xiamen University, P. R. China}\\
		{\small{Email: yanq@lzu.edu.cn (Q. Yan), qingying@xtu.edu.cn (Q. Deng), xajin@xmu.edu.cn (X. Jin)}}
	}
	\date{}
	\maketitle

\begin{abstract}

We prove a conjecture of Fomichev and Karev [{European J. Combin.} 127 (2025) 104160] by showing the equality of two graph invariants: $\varphi$, defined via graph colorings, and $\psi$, derived from the $\mathfrak{sl}(2)$-weight system of its 2-dimensional irreducible representation.
\end{abstract}

\section{Introduction}
In this paper, we only consider simple graphs. The set of isomorphism classes of simple graphs on $n$ vertices is denoted $\mathbf{G}_n$ and $\mathbf{G} = \bigcup_{n\geq 0} \mathbf{G}_n$.

For graph $G\in \mathbf{G}$, we denote the set of its vertices by $V(G)$ and the set of its edges by $E(G)$. For subset $E' \subset E(G)$, we denote by $G|_{E'}$ the spanning subgraph of $G$ with the set of edges $E'$. By $\chi_3(G)$ we mean the number of proper colorings of the vertices of $G$ in three colors. Recall that a function $V(G)\to
\{1,\ldots,k\}$ is a (proper) coloring of graph $G$ if no two vertices connected by an edge are assigned the same value.

Fomichev and Karev \cite{Fomichev}
introduced the function \(\varphi : \mathbf{G} \to \mathbb{C}\) and proved that it satisfies the graph Chmutov--Varchenko relations at \(\frac{3}{8}\).

\begin{definition}[Function $\varphi$]\label{defn:phi}
    For $G\in \mathbf{G}$, we set
    $$\varphi(G) = 2^{-3|V(G)|}\sum_{E' \subset E(G)}(-2)^{|E'|} \chi_3(G|_{E'}).$$
\end{definition}

It is known (see, e.g., \cite{Chmutov}) that the value of the \( \mathfrak{sl}(2)\)-weight system on its 2-dimensional irreducible representation can be explicitly computed for any chord diagram. Moreover, this formula can be extended to graphs as follows \cite{Fomichev}:

\begin{definition}[Function $\psi$]\label{defn:psi}
For $G\in \mathbf{G}$, we define
	\begin{align*}
		\psi(G)=
		\frac{1}{2^{2|V(G)|}} \sum_{U\subset V(G)} \left(-\frac{1}{2}\right)^{|V(G)|-|U|} 2^{\corank(A(G|_U))},
	\end{align*}
	where
	\begin{itemize}
		\item the sum is taken over all possible subsets $U$ of the vertex set $V(G)$;
		\item $G|_U$ is the subgraph of $G$ induced by vertex subset $U$;
		\item $A(G)$ is the adjacency matrix of $G$ with coefficients in the field with two elements;
		\item $\corank$ is the corank of the matrix.
	\end{itemize}
\end{definition}

\begin{conjecture}[\cite{Fomichev}]\label{con1}
For $G\in \mathbf{G}$, we have $$\varphi(G)=\psi(G).$$
\end{conjecture}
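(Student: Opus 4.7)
The plan is to reduce both $\varphi(G)$ and $\psi(G)$ to a common sum over vertex subsets $W \subset V(G)$ for which $G|_W$ has all even degrees, and to bridge them through an $\mathbb{F}_2^2$-encoding of the three colors together with a classical Eulerian identity.

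First I would simplify $\varphi(G)$. Substituting Whitney's rank expansion $\chi_3(H) = \sum_{F \subset E(H)}(-1)^{|F|} 3^{c(V(H), F)}$ (with $c$ the number of connected components) into Definition~\ref{defn:phi} and interchanging the order of summation collapses the inner sum over $E' \supset F$ to $(-2)^{|F|}(-1)^{|E(G)|-|F|}$, yielding $\varphi(G) = 2^{-3|V(G)|}(-1)^{|E(G)|}\, Z_G(3,-2)$ with $Z_G$ the Potts partition function. The standard spin expansion $Z_G(3,-2) = \sum_{f\colon V(G) \to \{1,2,3\}} \prod_{uv \in E(G)} (1 - 2\delta_{f(u),f(v)})$, together with the absorption of $(-1)^{|E(G)|}$ into the exponent, gives
\[
\varphi(G) \;=\; \frac{1}{2^{3|V(G)|}} \sum_{f\colon V(G) \to \{1,2,3\}} (-1)^{\mathrm{poly}(f)},
\]
where $\mathrm{poly}(f)$ counts edges $uv$ with $f(u)\neq f(v)$.

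Next I would simplify $\psi(G)$. Using $2^{\corank A(G|_U)} = |\{W \subset U \colon A(G|_U)\mathbf{1}_W = 0 \text{ over } \mathbb{F}_2\}|$ and swapping the sums over $U$ and $W$, the inner sum over $U$ with $W \subset U \subset T_W$, where $T_W := \{v \in V(G) \colon |N_G(v) \cap W|\text{ is even}\}$, is a finite geometric sum that vanishes unless $W \subset T_W$, i.e.\ unless every vertex of $W$ has even degree in $G|_W$. Applying the handshake identity $|V(G) \setminus T_W| \equiv e_G(W, V(G)\setminus W) \pmod 2$ to rewrite the sign yields
\[
\psi(G) \;=\; \frac{1}{2^{3|V(G)|}} \sum_{\substack{W \subset V(G)\\ G|_W \text{ has all even degrees}}} (-1)^{e_G(W,\, V(G) \setminus W)} \cdot 2^{|W|}.
\]

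The core of the argument is the identification of these two expressions. I would identify $\{1,2,3\}$ with the three nonzero elements of $\mathbb{F}_2^2$, so that a coloring $f$ corresponds bijectively to a pair of subsets $(W_1, W_2)$ with $W_1 \cup W_2 = V(G)$. For each edge $e = uv$, the Boolean identity $a \vee b \equiv a + b + ab \pmod 2$ factorizes the polychromatic sign as
\[
(-1)^{\mathbf{1}[f(u) \neq f(v)]} \;=\; (-1)^{\mathbf{1}[e \in \delta W_1]} \cdot (-1)^{\mathbf{1}[e \in \delta W_2]} \cdot (-1)^{\mathbf{1}[e \in \delta W_1 \cap \delta W_2]},
\]
where $\delta W_i$ denotes the edge boundary of $W_i$. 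Reparameterizing $(W_1, W_2) = (W, V(G) \setminus W')$ with $W' \subset W$ and performing an edge-type case analysis shows $|\delta W'| + |\delta W \cap \delta W'| \equiv e_{G|_W}(W', W \setminus W') \pmod 2$. Summing over $W' \subset W$ first and invoking the Eulerian identity
\[
\sum_{W' \subset W}(-1)^{e_{G|_W}(W', W \setminus W')} \;=\; 2^{|W|}\, \mathbf{1}[G|_W \text{ has all even degrees}],
\]
which is immediate from $\prod_{v \in W}(1 + (-1)^{d_{G|_W}(v)})$, leaves exactly the formula for $\psi(G)$. The main obstacle is spotting the right encoding of the three colors as pairs of $\mathbb{F}_2$-subsets; once the Boolean factorization is in hand, the remaining identification is essentially an edge-by-edge accounting.
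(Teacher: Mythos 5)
Your proposal is correct, and its $\psi$ half---counting the kernel of $A(G|_U)$ by the supports $W$ of its elements, interchanging the two summations, evaluating the binomial sum over $W \subset U \subset T_W$ (empty, hence zero, unless $W\subset T_W$, i.e.\ unless $G|_W$ is Eulerian), and converting the resulting sign via the handshake identity---is essentially identical to the paper's treatment of $\psi$, differing only in that the paper phrases it with vectors $x\in\mathbf{F}_2^{V(G)}$ and an explicit bijection claim rather than with subsets. Where you genuinely diverge is on the $\varphi$ side: the paper simply imports from Fomichev--Karev the identity $\varphi(G) = 2^{-3|V(G)|}\sum_{U:\, G|_U \text{ Eulerian}}(-1)^{|E(U,V(G)\setminus U)|}2^{|U|}$ (its Lemma 2.1) and stops once $\psi$ has been brought to that form, whereas you reprove this reduction from scratch: Whitney's rank expansion gives $\varphi(G)=2^{-3|V(G)|}(-1)^{|E(G)|}Z_G(3,-2)$, the spin expansion turns this into a signed sum over all (not necessarily proper) $3$-colorings, and the $\mathbb{F}_2^2$-encoding of the colors, the Boolean factorization of the bichromatic sign, the edge-type identity $|\delta W'|+|\delta W\cap\delta W'|\equiv e_{G|_W}(W',W\setminus W')\pmod 2$, and the product formula $\prod_{v\in W}\bigl(1+(-1)^{d_{G|_W}(v)}\bigr)=2^{|W|}\,\mathbf{1}[G|_W \text{ Eulerian}]$ collapse it to the same Eulerian-subset sum. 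I checked each of these steps and they are sound. What your route buys is self-containedness: the paper's argument is complete only modulo the cited lemma, while yours proves both reductions; the cost is the extra Potts-model and color-encoding machinery, which makes your write-up longer than the paper's.
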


In this paper, we prove Conjecture \ref{con1}.

\section{Proof of Conjecture \ref{con1}}

Let $G \in \mathbf{G}$. For a subset $U \subset V(G)$, we denote by $E(U, V(G) \setminus U)$ the set of edges with one endpoint in $U$ and the other in $V(G) \setminus U$. A graph is \emph{Eulerian} if all vertex
degrees are even.

\begin{lemma}[\cite{Fomichev}]\label{prop:alt1}
	For a graph $G\in \mathbf{G}$ with the set of vertices $V(G)$ and the set of edges $E(G)$, we have
	\[
		\varphi(G) = 2^{-3|V(G)|} \sum_{\substack{U \subset V(G)\\ G|_U \text{ is Eulerian}}} (-1)^{|E(U,V(G)\setminus U)|} 2^{|U|}.
	\]
\end{lemma}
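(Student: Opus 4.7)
The plan is to expand $\chi_3(G|_{E'})$ as a sum over ordered $3$-colorings of $V(G)$, interchange the order of summation to eliminate the sum over $E'$, and then evaluate the resulting state sum over colorings by conditioning on one color class.

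First I would write $\chi_3(G|_{E'}) = \sum_{c} \prod_{uv \in E'}(1 - \delta_{c(u),c(v)})$, swap the sums, and collapse $\sum_{E' \subset E(G)}\prod_{e \in E'}x_e = \prod_{e \in E(G)}(1+x_e)$ to obtain
\[
2^{3|V(G)|}\varphi(G) = \sum_{c}\prod_{uv \in E(G)}\bigl(2\delta_{c(u),c(v)}-1\bigr) = \sum_c (-1)^{b(c)},
\]
where $b(c)$ is the number of bichromatic edges under $c$. Since a coloring is an ordered partition $V(G) = V_0 \sqcup V_1 \sqcup V_2$ and $b(c) = |E(G)| - \sum_{i=0}^{2}|E(G|_{V_i})|$, I can pull out the global factor $(-1)^{|E(G)|}$ and then deal with $(-1)^{\sum_i |E(G|_{V_i})|}$.

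Next I would fix the color class $W = V_0$, set $U = V(G)\setminus W$, and parametrize the partition $(V_1, V_2)$ of $U$ by signs $\epsilon : U \to \{\pm 1\}$. Using the identity $(-1)^{[\epsilon_u = \epsilon_v]} = -\epsilon_u\epsilon_v$, the inner sum factors over vertices of $U$ as
\[
\sum_{\epsilon \in \{\pm 1\}^U}(-1)^{|E(G|_{V_1})|+|E(G|_{V_2})|} = (-1)^{|E(G|_U)|}\prod_{v \in U}\bigl(1 + (-1)^{d_{G|_U}(v)}\bigr),
\]
which equals $(-1)^{|E(G|_U)|}\,2^{|U|}$ if $G|_U$ is Eulerian and is $0$ otherwise. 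This is precisely where the Eulerian condition enters.

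Finally, combining the external sign $(-1)^{|E(G)|}(-1)^{|E(G|_W)|}$ with the factor $(-1)^{|E(G|_U)|}$ and invoking $|E(G)| = |E(G|_W)| + |E(G|_U)| + |E(W,U)|$, the accumulated sign collapses to $(-1)^{|E(W,U)|} = (-1)^{|E(U,V(G)\setminus U)|}$. Re-indexing the sum by $U$ instead of $W$ then yields the formula in the statement. The only real obstacle is keeping the various $\pm 1$ contributions straight across the three steps; conceptually the argument is a $\mathbb{Z}/2\mathbb{Z}$-Fourier expansion of the Potts-like state sum associated to the matrix $2I-J$, with the Eulerian condition emerging as the vertex-wise parity constraint.
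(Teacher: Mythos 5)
Your proposal is correct and complete: the expansion $\chi_3(G|_{E'})=\sum_c\prod_{uv\in E'}(1-\delta_{c(u),c(v)})$, the collapse of the sum over $E'$ to $\prod_{uv\in E(G)}(2\delta_{c(u),c(v)}-1)=(-1)^{b(c)}$, the $\{\pm1\}$-parametrization giving $\prod_{v\in U}\bigl(1+(-1)^{d_{G|_U}(v)}\bigr)$ (which enforces the Eulerian condition), and the sign bookkeeping via $|E(G)|=|E(G|_W)|+|E(G|_U)|+|E(W,U)|$ all check out. Note that the paper itself does not prove this lemma — it is quoted from Fomichev and Karev — so there is no in-paper argument to compare against; your derivation stands as a valid self-contained proof of the cited identity.
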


\noindent\textbf{Proof of Conjecture \ref{con1}.}
We begin with the definition of $\psi$:
\[
\psi(G) = 2^{-2|V(G)|}\sum_{U \subset V(G)} \left(-\frac12\right)^{|V(G)|- |U|}  2^{\corank(A(G|_U))}.
\]
Since $2^{\corank(A(G|_U))} = \left\lvert \left\{ y \in \mathbf{F}_2^U : A(G|_U) y = \mathbf{0} \right\} \right\rvert$, where $\mathbf{F}_2^U$ denotes the vector space of column vectors over $\mathbf{F}_2$ (the field with two elements) indexed by vertices in $U$ and $\mathbf{0}$ denotes the zero vector, we obtain
\[
\psi(G) = 2^{-2|V(G)|}\sum_{U \subset V(G)} \left(-\frac12\right)^{|V(G)|- |U|}  \left\lvert \left\{ y \in \mathbf{F}_2^U : A(G|_U) y = \mathbf{0} \right\} \right\rvert.
\]

We introduce the following notation for any vector $x \in \mathbf{F}_2^{V(G)}$: $\operatorname{supp}(x) = \{ i \in V(G) : x_i = 1 \}$ and $S(x) = \{ i \in V(G) : (Ax)_i = 0 \}$. Note that $S(x) = V(G) \setminus \operatorname{supp}(Ax)$. Based on these notations, we state the following claim.

\begin{claim}\label{claim}
For any subset $U \subset V(G)$, we have
\[
\left\lvert \left\{ y \in \mathbf{F}_2^U : A(G|_U) y = \mathbf{0} \right\} \right\rvert = \left\lvert \left\{ x \in \mathbf{F}_2^{V(G)} : \operatorname{supp}(x) \subset U \subset S(x) \right\} \right\rvert.
\]
\end{claim}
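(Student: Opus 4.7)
The plan is to produce an explicit bijection between the two sets via zero-padding. Given $y \in \mathbf{F}_2^U$ satisfying $A(G|_U) y = \mathbf{0}$, I would define $x \in \mathbf{F}_2^{V(G)}$ by $x_i = y_i$ for $i \in U$ and $x_i = 0$ for $i \notin U$. Conversely, given $x \in \mathbf{F}_2^{V(G)}$ with $\operatorname{supp}(x) \subset U \subset S(x)$, I would let $y$ be the restriction of $x$ to the coordinates indexed by $U$. These two maps are visibly mutually inverse, provided each one lands in the prescribed target set.

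To check well-definedness of the forward map, observe first that $\operatorname{supp}(x) = \operatorname{supp}(y) \subset U$ holds by construction, so only the condition $U \subset S(x)$ requires verification. The key observation is that since $G|_U$ is the \emph{induced} subgraph, for any $i, j \in U$ the $(i,j)$-entry of $A(G|_U)$ equals the $(i,j)$-entry of $A = A(G)$; that is, $A(G|_U)$ is literally the principal submatrix of $A$ indexed by $U$. Therefore, for every $i \in U$,
\[
(Ax)_i = \sum_{j \in V(G)} A_{ij} x_j = \sum_{j \in U} A_{ij} y_j = (A(G|_U) y)_i = 0,
\]
where the middle equality uses $x_j = 0$ for $j \notin U$. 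This shows $U \subset S(x)$. The reverse map is verified by reading the same chain of equalities backwards: if $\operatorname{supp}(x) \subset U$ and $(Ax)_i = 0$ for all $i \in U$, then the restriction $y$ of $x$ to $U$ satisfies $A(G|_U) y = \mathbf{0}$.

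I do not expect any genuine obstacle here: the content of the claim is essentially the tautology that a homogeneous equation governed by a principal submatrix is the same as the ambient equation restricted to vectors supported on the corresponding index set. The only care required is in bookkeeping the two identifications (the support condition on $x$, and the restriction from $\mathbf{F}_2^{V(G)}$ to $\mathbf{F}_2^U$) and in explicitly invoking the \emph{induced} nature of $G|_U$, which is what lets us equate the entries of $A(G|_U)$ with the corresponding entries of $A$.
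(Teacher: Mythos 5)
Your proposal is correct and follows essentially the same route as the paper's own proof: the same zero-padding bijection $y \mapsto x$, the same observation that $A(G|_U)$ is the principal submatrix of $A$ indexed by $U$, and the same chain of equalities establishing $U \subset S(x)$ and its converse for the restriction map. No gaps.
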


\begin{proof}
We construct an explicit bijection between the two sets. Define a mapping $\Phi$ from $\{ y \in \mathbf{F}_2^U : A(G|_U) y = \mathbf{0} \}$ to $\{ x \in \mathbf{F}_2^{V(G)} : \operatorname{supp}(x) \subset U \subset S(x) \}$ as follows:

For any $y \in \mathbf{F}_2^U$ satisfying $A(G|_U) y = \mathbf{0}$, define $\Phi(y) = x \in \mathbf{F}_2^{V(G)}$ by
\[
x_i =
\begin{cases}
y_i & \text{if } i \in U, \\
0 & \text{if } i \notin U.
\end{cases}
\]

We now verify that $\Phi$ is well-defined and establishes a bijection.

(1) \textbf{Well-definedness:} For any such $y$, we have $\operatorname{supp}(x) \subset U$ by definition. To show $U \subset S(x)$, note that $A(G|_U)$ is the submatrix of $A$ with rows and columns indexed by $U$. The condition $A(G|_U) y = \mathbf{0}$ means that for all $i \in U$, we have $\sum_{j \in U} A_{ij} y_j = 0$. Since $x_j = y_j$ for $j \in U$ and $x_j = 0$ for $j \notin U$, it follows that for any $i \in U$:
\[
(A x)_i = \sum_{j \in V(G)} A_{ij} x_j = \sum_{j \in U} A_{ij} x_j + \sum_{j \notin U} A_{ij} x_j = \sum_{j \in U} A_{ij} x_j = \sum_{j \in U} A_{ij} y_j = 0. \tag{1}
\]
Thus, $(A x)_i = 0$ for all $i \in U$, so  $U \subset S(x)$.

(2) \textbf{Injectivity:} Suppose $\Phi(y) = \Phi(y')=x$ for $y, y' \in \mathbf{F}_2^U$ with $A(G|_U) y = A(G|_U) y' = \mathbf{0}$. Then by definition of $\Phi$, we have $y_i = y_i'=x_i$ for all $i \in U$, so $y = y'$.

(3) \textbf{Surjectivity:} For any $x \in \mathbf{F}_2^{V(G)}$ with $\operatorname{supp}(x) \subset U \subset S(x)$, define $y \in \mathbf{F}_2^U$ as the restriction of $x$ to $U$. Then $\Phi(y) = x$ by definition. Moreover, since $U \subset S(x)$, we have $(A x)_i = 0$ for all $i \in U$. By equation (1), this implies that $A(G|_U) y = \mathbf{0}$.

Therefore, $\Phi$ is a bijection between the two sets, and the claim follows.
\end{proof}

By Claim \ref{claim}, we can rewrite $\psi(G)$ as:
\[
\psi(G) = 2^{-2|V(G)|} \sum_{U \subset V(G)} \left(-\frac12\right)^{|V(G)|- |U|} \left\lvert \left\{ x \in \mathbf{F}_2^{V(G)} : \operatorname{supp}(x) \subset U \subset S(x) \right\} \right\rvert.
\]
Interchanging the order of summation, we now consider the sum over $x \in \mathbf{F}_2^{V(G)}$ first, followed by the sum over all subsets $U$ satisfying $\operatorname{supp}(x) \subset U \subset S(x)$:

\[
\psi(G) = 2^{-2|V(G)|} \sum_{x \in \mathbf{F}_2^{V(G)}} \sum_{\operatorname{supp}(x) \subset U \subset S(x)} \left(-\frac12\right)^{|V(G)|- |U|}.
\]

Fix a vector $x \in \mathbf{F}_2^{V(G)}$ and let $n = |V(G)|$, $s = |\operatorname{supp}(x)|$, and $m = |S(x)|$.
The subsets $U$ satisfying $\operatorname{supp}(x) \subset U \subset S(x)$ are precisely those obtained by taking $\operatorname{supp}(x)$ (of size $s$) and adding $t$ additional vertices from $S(x) \setminus \operatorname{supp}(x)$ (which has size $m - s$), where $0 \le t \le m - s$.

For fixed $t$, we have $|U| = s + t$, so $|V(G)| - |U| = n - s - t$. Then
\[
\begin{aligned}
\sum_{\operatorname{supp}(x) \subset U \subset S(x)} \left(-\frac12\right)^{|V(G)|- |U|}
&= \sum_{t=0}^{m-s} \binom{m-s}{t} \left(-\frac12\right)^{n- s -t} \\
&= \left(-\frac12\right)^{n-s} \sum_{t=0}^{m-s} \binom{m-s}{t} (-2)^t \\
&= (-1)^{m-s} \left(-\frac{1}{2}\right)^{n-s} \\
&= (-1)^{m-s} (-1)^{n-s}  \left(\frac12\right)^{n-s} \\
&= (-1)^{m+n-2s} 2^{-(n-s)}
\\
&= (-1)^{m+n} 2^{s-n}.
\end{aligned}
\]
Recall that \( m = |S(x)| = n - |\operatorname{supp}(Ax)| \), which implies that \( m + n = 2n - |\operatorname{supp}(Ax)| \).
As \( 2n \) is even, we obtain \( (-1)^{m+n} = (-1)^{|\operatorname{supp}(Ax)|} \).
Therefore,
\[
\begin{aligned}
\sum_{\operatorname{supp}(x) \subset U \subset S(x)} \left(-\frac12\right)^{|V(G)|- |U|}
= (-1)^{|\operatorname{supp}(Ax)|}  2^{s-n}
=(-1)^{|\operatorname{supp}(Ax)|} 2^{|\operatorname{supp}(x)| - n}.
\end{aligned}
\]			
Thus
\[
\begin{aligned}
\psi(G) &= 2^{-2n} \sum_{\substack{x \in \mathbf{F}_2^{V(G)} \\ \operatorname{supp}(x) \subset S(x)}} (-1)^{|\operatorname{supp}(A x)|}  2^{|\operatorname{supp}(x)| - n} \\
&= 2^{-3n} \sum_{\substack{x \in \mathbf{F}_2^{V(G)} \\ \operatorname{supp}(x) \subset S(x)}} (-1)^{|\operatorname{supp}(A x)|} 2^{|\operatorname{supp}(x)|}.
\end{aligned}
\]

There is a natural bijection between vectors $x \in \mathbf{F}_2^{V(G)}$ and subsets $U \subset V(G)$ given by $U = \operatorname{supp}(x)$. Under this correspondence, we observe that $2^{|\operatorname{supp}(x)|} = 2^{|U|}$ and $(-1)^{|\operatorname{supp}(A x)|} = (-1)^{|\operatorname{supp}(A \mathbf{1}_U)|}$, where $\mathbf{1}_U$ denotes the characteristic vector of $U$ (i.e., $(\mathbf{1}_U)_i = 1$ if $i \in U$ and $0$ otherwise). Furthermore, the condition $\operatorname{supp}(x) \subset S(x)$ translates to $U \subset S(U)$, where $S(U) = V(G) \setminus \operatorname{supp}(A \mathbf{1}_U)$.
Thus
\[
\psi(G) = 2^{-3n} \sum_{\substack{U \subset V(G) \\ U \subset S(U)}} (-1)^{|\operatorname{supp}(A \mathbf{1}_U)|} 2^{|U|}. \tag{2}
\]

For any vertex $i \in V(G)$ and any subset $U \subset V(G)$, let $N_G(i)$ denote the set of neighbors of $i$ in $G$ and define $\deg_U(i) = |N_G(i) \cap U|$. Then for all $i \in V(G)$ the vector $A \mathbf{1}_U$ has components $$(A \mathbf{1}_U)_i = \deg_U(i) \bmod 2.$$
Thus, we have
\[
|\operatorname{supp}(A \mathbf{1}_U)| = |\{ i \in V(G) : \deg_U(i) \equiv 1 \pmod{2} \}|
\]
and
\[
S(U) = \{ i \in V(G) : \deg_U(i) \equiv 0 \pmod{2} \}.
\]
The condition $U \subset S(U)$ is equivalent to $\deg_U(i) \equiv 0 \pmod{2}$ for all $i \in U$, implying that every vertex in the induced subgraph $G|_U$ has even degree, i.e., $G|_U$ is Eulerian.

Now, consider the sum of $\deg_U(i)$ over all vertices $i \in V(G)$. We observe that
\[
\begin{aligned}
|\operatorname{supp}(A \mathbf{1}_U)| &= |\{ i \in V(G) : \deg_U(i) \equiv 1 \pmod{2} \}| \\
&= \sum_{i \in V(G)} (\deg_U(i) \bmod 2) \\
&\equiv \sum_{i \in V(G)} \deg_U(i) \pmod{2}.
\end{aligned} \tag{3}
\]

On the other hand, we can compute the sum explicitly by considering edge contributions:
\[
\sum_{i \in V(G)} \deg_U(i) = 2|E(U)| + |E(U, V(G) \setminus U)|,
\]
since each edge within $U$ contributes $2$ to the sum, while each edge between $U$ and its complement contributes $1$. Reducing modulo $2$ yields:
\[
\sum_{i \in V(G)} \deg_U(i) \equiv |E(U, V(G) \setminus U)| \pmod{2}. \tag{4}
\]
From equations (3) and (4), we obtain
\[
|\operatorname{supp}(A \mathbf{1}_U)| \equiv |E(U, V(G) \setminus U)| \pmod{2},
\]
which implies
\[
(-1)^{|\operatorname{supp}(A \mathbf{1}_U)|} = (-1)^{|E(U, V(G) \setminus U)|}.
\]

Therefore, we can rewrite the sum as:
\[
\sum_{\substack{U \subset V(G) \\ U \subset S(U)}} (-1)^{|\operatorname{supp}(A \mathbf{1}_U)|} 2^{|U|} = \sum_{\substack{U \subset V(G) \\ G|_U \text{ is Eulerian}}} (-1)^{|E(U, V(G) \setminus U)|} 2^{|U|}.
\]
Combining this with equations (2), we conclude
\[
\psi(G) = 2^{-3n} \sum_{\substack{U \subset V(G) \\ G|_U \text{ is Eulerian}}} (-1)^{|E(U, V(G) \setminus U)|} 2^{|U|}.
\]
By Lemma \ref{prop:alt1}, the right-hand side is exactly $\varphi(G)$, thus proving that $\psi(G) = \varphi(G)$.

\section*{Acknowledgements}
This work is supported by NSFC (Nos. 12471326, 12571379), and partially supported
by the the 111 Project (No. D23017), the Excellent Youth Project of Hunan Provincial
Department of Education, P. R. China (No. 23B0117).

\end{document}